\numberwithin{equation}{section}
\newtheorem{lem}{Lemma}[section]
\newtheorem{thm}{Main Theorem}
\theoremstyle{remark}
\renewcommand{\hat}{\widehat}
\newcommand{\f}{\frac}
\newcommand{\p}{\partial}
\newcommand{\beq}{\begin{equation}}
\newcommand{\eeq}{\end{equation}}	
\newcommand{\R}{{\mathbb R}}
\newcommand{\del}{\partial}
\newcommand{\dt}{ \, {\rm d} t}
\newcommand{\dx}{ \, {\rm d} x}
\newcommand{\dy}{ \, {\rm d} y}
\newcommand{\dz}{ \, {\rm d} z}
\newcommand{\dxi}{ \, {\rm d} \xi}
\newcommand{\dv}{ \, {\rm d} v}
\newcommand{\Eps}{\epsilon}
\newcommand{\Ni}{\noindent}
\newcommand{\BigO}{{\mathcal{O}}}
\newcommand{\CalD}{{\mathcal{D}}}
\newcommand{\RR}{\mathbb{R}}
\newcommand{\VV}{\mathbb{V}}
\newcommand{\abs}[1]{\left\lvert#1\right\rvert}
\newcommand{\vint}[1]{\left\langle#1\right\rangle}
\newcommand{\vpran}[1]{\left(#1\right)}
\begin{document}

\title{The fractional diffusion limit of a kinetic model with biochemical pathway}
\author{
Beno\^ \i t Perthame
\and
Weiran Sun
\and
Min Tang
}
\address{Sorbonne Universit\'e, UPMC Univ Paris 06,  Laboratoire Jacques-Louis Lions  UMR CNRS 7598, Inria de Paris, team Mamba, F75005 Paris, France.
\newline
This author has received funding from the European Research Council (ERC) under the European UnionÕs Horizon 2020 research and innovation programme (grant agreement No 740623). }

\address{Department of Mathematics, Simon Fraser University, 8888 University Dr., Burnaby, BC V5A 1S6, Canada.
\newline
This author is partially supported by NSERC Discovery Grant No. R611626.}

\address{Institute of natural sciences and department of mathematics, 
Shanghai Jiao Tong University, Shanghai, 200240, China. 
\newline
This author is partially supported by NSFC 11301336
and 91330203.}
\date{\today}
\maketitle

\begin{abstract}
Kinetic-transport equations that take into account the intra-cellular pathways are now considered as the correct description of bacterial chemotaxis by run and tumble. Recent mathematical studies have shown their interest and their relations to more standard models. Macroscopic equations of Keller-Segel type have been derived using parabolic scaling. 
Due to the randomness of receptor methylation or intra-cellular chemical reactions,  noise occurs in the signaling pathways and affects the tumbling rate. Then, comes the question to understand  the role  of an internal noise on the behavior of  the full  population.   
In this paper we consider a kinetic model for chemotaxis which includes biochemical pathway with noises. We show that under proper scaling and conditions on the tumbling frequency as well as the form of noise, fractional diffusion can arise in the macroscopic limits of the kinetic equation. This gives a new mathematical theory about how long jumps can be due to the internal noise of the bacteria. 
\end{abstract}
\bigskip

\noindent{\makebox[1in]\hrulefill}\newline
2010 \textit{Mathematics Subject Classification.} 35B25; 35R11; 82C40; 92C17
\newline\textit{Keywords and phrases.}  Mathematical biology; kinetic equations;  chemotaxis; asymptotic analysis; run and tumble; biochemical pathway; Fractional Laplacian ; L{\'e}vy walk.

\section*{Introduction}

Kinetic-transport equations are often used to describe the population dynamics of  bacteria moving by run-and-tumble. One of the key biological properties relating to bacteria movement is how a bacterium determines its tumbling frequency. The tumbling frequency is the rate for a running bacterium to stop and change its moving direction. Recently it has been found that, for a large class of bacteria, the tumbling frequencies depend on the level of the external chemotactic signal as well as the internal states of the bacteria. Based on this observation, kinetic models incorporating the intracellular chemo-sensory system are introduced in \cite{ErbanOthmer04,SWOT}, which write
\beq\label{eq:model-no-noise}
\del_t q
+ v  \nabla_\mathbf{x} q
+ \del_y \vpran{f(y,S)q}
= \Lambda(y,S) (\vint{q} - q) \,.
\eeq
Here $q(t,\mathbf{x},\mathbf{v},y)$ denotes the probability density function of bacteria at time $t$, position $\mathbf{x}\in\mathbb{R}^d$, velocity $\mathbf{v}\in\VV$  with $\VV$ the sphere (or the ball) with radius $V_0$, 
and the intra-cellular molecular content $y \in \R$. The function $f(y,S)$ takes into account the slowest reaction in the chemotactic signal transduction pathways for a given external effective signal $S$. The right hand side terms in \eqref{eq:model-no-noise} describes the velocity jump process where $\Lambda(y,S)$ is the tumbling frequency.
The specific forms of $f(y,S)$ and $\Lambda(y,S)$ depend on different types of bacteria, where a linear cartoon description for $f(y,S)$ is used in \cite{ErbanOthmer04} and more sophisticated forms for E.coli chemotaxis have been studied in~\cite{JOT,OXX}.  The frequency $\Lambda(y,S)$ is determined by the regulation of   the flagellar motors by biochemical pathways \cite{JOT} and it usually has steep transition with respect to $y$.

In the case when the external signal $S$ is absent, macroscopic models have been derived from~\eqref{eq:model-no-noise} in the diffusion regime. For example, in \cite{ErbanOthmer04, ErbanOthmer07, DS05, STY, X} the authors  have recovered the Keller-Segel type of equations that govern the dynamics of cell density as the diffusion limit of~\eqref{eq:model-no-noise}. These results indicate the underlying microscopic dynamics of the bacteria follow the Brownian motion.   

Recent experiments of tracking individual cell trajectories, however, showed that some bacteria actually adopts a L{\'e}vy-flight type movement instead of the Brownian motion \cite{BGM, ARBPHB}. L{\'e}vy flight is a random process whose path length distribution obeys a power-law decay, as opposed to the Brownian motion whose path length distribution decays exponentially. Therefore, a L{\'e}vy flight exhibits a non-negligible probability of "long jumps". Various explanations have been proposed to understand the origin of the long jumps. For example, the works in \cite{KEVSC,TG} relate molecular noise to power-law switching in bacterial flagellar motors. The model in \cite{Matt09} suggests that the fluctuation in CheR (a protein which regulates the receptor activity) can induce the power-law distribution of the path length.

Motivated by the aforementioned experimental and theoretical work, we study in this paper a kinetic model that incorporates noise in the intra-cellular molecular content $y$ in equation \eqref{eq:model-no-noise}. Similar equation has appeared in \cite{PTV}. Our main goal is to rigorously derive fractional diffusion equations (which correspond to L{\'e}vy processes) from the new kinetic equation. The particular equation we consider is as follows: 
\begin{align} \label{eq:scaled-kinetic}
   \Eps^{1+\mu} \del_t q_\Eps
+ \Eps v \cdot  \nabla_\mathbf{x} q_\Eps
- \Eps^s\del_y &\vpran{D(y) Q_0(y) \del_y\frac {q_\epsilon}{Q_0} }
= \Lambda(y) (\vint{q_\Eps} - q_\Eps) \,,
\\
q_\Eps (0, x, v, y) & = q^{in}(x, v, y) := \rho^0(x) Q_0(y) \geq 0 \,,
\end{align}
where $0 < \mu < 1$, $0< s < 1+ \mu$, and 
\begin{align*}
    \vint{q_\Eps}(t,x,y) := \int_\VV q_\Eps (t,x,v,y) dv \, ,
\end{align*}
with $\VV$ being the sphere $\p B(0, V_0) \subseteq \R^d$ and $dv$ is the normalized surface measure. For later purpose, we also introduce the notation
$$
 \rho_\Eps(t,x)= \int_\R \vint{q_\Eps}(t,x,y) dy.
$$

The given function $Q_0(y)$ can be viewed as the equilibrium distribution in $y$ in absence of outside signal. 
One can decompose the $y$ derivative term on the left hand side of \eqref{eq:scaled-kinetic} into two terms
$$
\Eps^s\del_y \vpran{D(y) Q_0(y) \del_y\frac {q_\epsilon}{Q_0} }=\Eps^s\del_y \vpran{D(y)  \del_y q_\epsilon }-\Eps^s\del_y \vpran{D(y)  \frac {\del_y Q_0}{Q_0}q_\Eps }.
$$
Therefore,  $D(y)$ turns out to be the diffusion coefficient in $y$.
Compared with the model in \eqref{eq:model-no-noise}, 
the diffusion term in $y$ takes into account the intrinsic noise of the signally pathway. For technical reasons we consider a specific form of noise and leave open the derivation with more general types. The initial datum $q^{in}(x,y,v)$ is assumed to be independent of $\Eps$ and takes a separated form for simplicity. One can also consider the more general case where the sequence of initial data converges as $\Eps \to 0$.

We identify conditions on the parameters and coefficients that give rise to a fractional diffusion limit as $\Eps \to 0$. We will show that under these conditions, there exists $\rho(t, x)$ such that the density function $q_\Eps$ satisfies
\begin{equation}
q_\Eps (t,x,v,y)  \to \rho(t,x) Q_0(y)
\qquad
 \text{as $\Eps \to 0$}
\label{limi_gen}
\end{equation}
and $\rho$ solves
\begin{equation} \left\{ \begin{array}{l}
 \del_t  \rho(t,x) + \nu  \vpran{-\Delta}^{\frac{1+\mu}{2}}\rho=0,
\\ 
 \rho(0,x) = 
 \rho^0(x) \,,
\end{array}\right. 
\label{eq:fractional}
\end{equation}
where the constant $\nu>0$ can be computed explicitly. 

Deriving fractional diffusion models from a classical kinetic model (where the density function only depends on $(t, x, v)$) is initiated in \cite{Olla2009} by probabilistic methods and \cite{CMT,MMM,AMP} by analytic methods. The case of boundary conditions is treated in \cite{BGM2017}. In these works, the fractional diffusion arises either from a fat-tail equilibrium distribution in the velocity $v$ \cite{CMT,MMM, AMP} or the degeneracy of the collision frequency for small velocities \cite{Olla2009, AMP}.  In some recent works in \cite{AS1,Bell2016}, similar results have been extended to kinetic models for chemotaxis, where a fractional diffusion equation with advection  is derived when there exist small bias along the direction of the chemical gradient. We note that, in all previous works for chemotaxis,  the fractional diffusion occurs from fat tail distribution with unbounded  velocities $v$, while in chemotaxis it is more realistic to consider bounded bacteria velocities. This is our main contribution, to perform a rigorous derivation with the more physical assumption of  bounded velocities.  There are also works deriving fractional diffusion limits from kinetic equations with extended variables. For example, the models in \cite{FS, EGP} have the free path length as an independent variable and fraction diffusion limits are derived under the condition that the second moments of the path length distribution functions are unbounded. The models in \cite{FS, EGP} phenomenologically incorporate occasional long jumps in the tumbling frequency, while $\Lambda(y)$ in our model depends on the internal state.

In proving the fraction diffusion limit, we note two main differences in our methodology compared with earlier works. First, unlike in the (fractional) diffusion limits of classical kinetic equations (with only $(t, x, v)$ as their independent variables), the mass conservation equation in terms of $\rho_\Eps = \int_{\R^d} \int_{\VV} q_\Eps \dv\dy$ does not seem to be the proper setting for deriving the limiting equation. This is indeed due to the appearance of the extended variable $y$ and the additional noise term. Instead, we need to consider a properly weighted quantity $\int_{\R^d}\int_{\R^d} \int_{\VV} \chi_0 q_\Eps \dv\dy\dx$ where $\chi_0$ satisfies the dual equation given by~\eqref{eq:chi-0}. This weighted quantity thus encodes the effect of the noise. 
We note that working with a weighted density seems to be a general setting when deriving (fractional) diffusion limits of kinetic equations with extended variables. See for example in \cite{FS}, where the macroscopic equations for a non-classical kinetic equation are derived for the weighted density function against the path length distribution.  Compared with~\cite{FS}, the choice of the weight function $\chi_0$ in this paper is much less obvious.
Second, the derivation of the fractional diffusion equations in~\cite{CMT,MMM,AMP} relies on the method of auxiliary functions or a related Hilbert expansion. In the current paper,  we use the method of moments \cite{BSS} which leads  to reformulate the equation for $q_\Eps$ in a convenient way (see~\eqref{eq:hatqeps}) and apply it in the flux term of the conservation law. This  framework is more standard, intuitive and consistent with the classical Chapman-Enskog method of deriving macroscopic limits of kinetic equations.  
\\


The paper is organized as follows. We begin with stating our assumptions on the parameter range and the main result, i.e., the validity of~\eqref{eq:fractional}. The proof uses the two next sections. We first state several a priori bounds and estimates which are used several times in the  main core of the proof, which is given in Section~\ref{sec:asymptotics}. 

\section{Assumptions and main results}
\label{sec:assum}

\Ni {\bf Assumptions on the coefficients. } Let $M_0 > 1$, $A_0$, $A_1$  be  fixed numbers..  We are given a smooth function $Q_0(y)$ which describes the equilibrium in the internal state $y$, 
\begin{align} \label{def:q0}
     Q_0(y) 
     =  \begin{cases}
          c^+ |y|^{-\sigma} \,, & y > M_0 \,, \\[2pt]
          c^- |y|^{-\sigma} \,, & y < -M_0 \,,
         \end{cases}
        \qquad \qquad \sigma>1,  \qquad  Q_0(y) >0, \qquad   \int_\R Q_0\,dy=1 .
\end{align}

The mechanism at work here is the degeneracy of  the tumbling rate $\Lambda$, a smooth function on $\R$, namely
\begin{align} \label{def:Lambda}
   \Lambda(y) 
   = \begin{cases} 
        \BigO(1) \,, & y \geq M_0 \,, \\[2pt]
        |y|^{-\beta} \,, & y \leq -M_0 \,,
      \end{cases} 
\qquad \quad
   \abs{\Lambda'(y)} \leq \frac{A_0}{y^{\gamma}} 
\quad
\text{for $y > M_0$} \,,
\end{align}

Assume that the diffusion coefficient $D$ is a smooth functions on $\R$ such that
\begin{align} \label{def:D}
   D(y) 
   = \begin{cases}
       \BigO(1) \,, & y \in [-M_0, M_0] \,, \\[2pt]
       A_1 |y|^{n+1} \,, & |y| \geq M_0 \,.
      \end{cases}
\end{align}
for some $n > 0$ whose range will be specified in~\eqref{assump:main}. The conditions on $\sigma, \beta, \gamma$ are also described in~\eqref{assump:main}.

\medskip

\Ni {\bf Assumptions on the initial data.} We assume that, for some constant $B$, 
\begin{align}\label{ID}
q_0 \leq B Q_0, \qquad \quad     \int_\R \int_\R \int_\VV
     \frac{q_0^2}{Q_0} ( x, v, y)dv dy dx  \leq B, \qquad \quad  \int_\R \int_\R \int_\VV
q_0 ( x, v, y)dv dy dx \leq B \,.
\end{align}

\medskip

\Ni {\bf Parameter range.} The main assumptions of the parameters are 
\begin{align} \label{assump:main}
     n > \sigma > 1 \,, 
\qquad s > 1 \,,
\qquad \gamma>\frac{n-\sigma}{2} + 1\,,
\qquad \beta > n - 1\,,
\qquad
     \beta + n - 1 > s \beta > \beta + \sigma - 1\,.
\end{align}

The analysis below  leads to the relation  
\begin{equation} \label{value_mu}
\mu = \frac{n-1}{\beta} \in (0, 1) \, ,
\end{equation}
therefore, we observe that 
\begin{align*}
    \beta + n - 1 > s \beta
\Longleftrightarrow
    1 + \mu > s \,,
\end{align*}
which makes the time-derivative term in equation~\eqref{eq:scaled-kinetic} a (formally) high-order term. 
\\

Then, we have the 

\begin{thm}
Let $q_\Eps$ be the solution of~\eqref{eq:scaled-kinetic} with the above assumptions \eqref{def:q0}--\eqref{ID}. Suppose the parameters $n, \sigma, s, \beta, \gamma$ satisfy the parameter range~\eqref{assump:main}. Then, as $\Eps \to 0$, the limit \eqref{limi_gen} holds in the sense that $\frac{q_\Eps}{Q_0}$ converges  $L^\infty-w*$ to  $\rho \in L^\infty(\R^+; L^1\cap L^\infty(\R^d))$ and $\rho$ satisfies the 
fractional Laplacian equation~\eqref{eq:fractional}. 
\label{thm:main}
\end{thm}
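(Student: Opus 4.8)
The plan is the standard route for (fractional) diffusion limits: derive uniform a priori bounds, extract a weak-$*$ limit, and identify the limiting equation via the moment method of \cite{BSS}, with the fractional Laplacian produced by the heavy tail of $Q_0$ and the degeneracy of $\Lambda$ as $y\to-\infty$. \emph{Compactness and the form of the limit.} The maximum principle, propagated from $q_0\le BQ_0$, gives $0\le q_\Eps\le BQ_0$, so $g_\Eps:=q_\Eps/Q_0$ is bounded in $L^\infty(\R^+;L^1\cap L^\infty(\R^d\times\VV\times\R))$ uniformly in $\Eps$, and we extract $g_\Eps\overset{*}{\rightharpoonup}g$ along a subsequence. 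Multiplying \eqref{eq:scaled-kinetic} by $g_\Eps$ and integrating yields an entropy identity in which the energy $\iiint Q_0\,g_\Eps^2\,\dv\dy\dx$ is nonincreasing while the two dissipation rates $\Eps^{-1-\mu}\iiint\Lambda\,Q_0\,\abs{g_\Eps-\vint{g_\Eps}}^2\,\dv\dy\dx$ and $\Eps^{s-1-\mu}\iiint D\,Q_0\,\abs{\del_y g_\Eps}^2\,\dv\dy\dx$ are bounded in $L^1(0,T)$; since $s<1+\mu$ and $1+\mu>1$, both prefactors diverge, forcing $g_\Eps-\vint{g_\Eps}\to0$ and $\del_y g_\Eps\to0$. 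Hence $g$ is independent of $v$ and $y$, i.e. $g=\rho(t,x)$ and $q_\Eps\to\rho(t,x)Q_0(y)$, which is \eqref{limi_gen}; it remains to find the equation for $\rho$.

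\emph{Moment reformulation and weighted balance law.} Solving \eqref{eq:scaled-kinetic} algebraically for $q_\Eps$,
\[
q_\Eps=\vint{q_\Eps}-\frac{1}{\Lambda(y)}\Big(\Eps^{1+\mu}\del_t q_\Eps+\Eps\,v\cdot\nabla_x q_\Eps-\Eps^{s}\del_y\big(D(y)Q_0(y)\,\del_y g_\Eps\big)\Big),
\]
and iterating this identity once gives the reformulation \eqref{eq:hatqeps} of $q_\Eps$ in terms of $\vint{q_\Eps}$ plus remainders that the a priori bounds control (the decay of $\Lambda'$ governed by $\gamma$ and the growth of $D$ governed by $n$ enter precisely here). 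Because of the extended variable $y$ and the noise term, the bare balance law for $\rho_\Eps=\iint q_\Eps\,\dv\dy$ does not close; instead we test \eqref{eq:scaled-kinetic} against $\chi_0(y)\,\varphi(x)$ with $\varphi\in C_c^\infty(\R^d)$, where $\chi_0$ solves the stationary dual equation \eqref{eq:chi-0} --- which involves $D$ and thereby absorbs the effect of the $y$-noise --- and is constructed in the a priori section with two-sided control of $\chi_0$ and $\chi_0'$ at $\pm\infty$. Since $\chi_0$ is $v$-independent the collision term cancels, and by the defining property of $\chi_0$ the $y$-noise term drops out up to a lower-order remainder, leaving
\[
\del_t\iiint \chi_0\,\varphi\,q_\Eps\,\dv\dy\dx=\frac{1}{\Eps^{\mu}}\iiint \chi_0\,(v\cdot\nabla_x\varphi)\,\widehat q_\Eps\,\dv\dy\dx+(\text{time and remainder terms}),
\]
with the time term of size $O(\Eps^{1+\mu-s})\to0$; substituting the leading flux $\vint{v\,\widehat q_\Eps}\approx-\frac{\Eps V_0^2}{d\,\Lambda}\nabla_x\vint{q_\Eps}$ (using $\int_\VV v\otimes v\,\dv=\frac{V_0^2}{d}\,\mathrm{Id}$) turns the right-hand side into $\frac{\Eps^{1-\mu}V_0^2}{d}\iint \frac{\chi_0\,\vint{q_\Eps}}{\Lambda}\,\Delta_x\varphi\,\dy\dx$ plus lower order.

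\emph{Emergence of the fractional Laplacian, and the main obstacle.} The heart of the argument is to show that this last term converges, as $\Eps\to0$, to $-\nu\int_{\R^d}\rho\,(-\Delta)^{(1+\mu)/2}\varphi\,\dx$ with $\nu>0$ explicit. Here $\vint{q_\Eps}/Q_0\to\rho$ in the bulk, but the Taylor expansion of the flux breaks down for large $\abs{y}$, so one must resolve, near the $\Eps$- and frequency-dependent crossover scale in $y$, the competition between free transport at effective speed $\Eps^{-\mu}V_0$, the $y$-noise with diffusivity $\propto D(y)=A_1\abs{y}^{n+1}$, and the vanishing tumbling rate $\Lambda(y)=\abs{y}^{-\beta}$; carrying this out (most transparently on the Fourier side in $x$) the heavy-tailed $y$-integral $\iint\chi_0\,\vint{q_\Eps}/\Lambda\,\dy\dx$ lowers the order of the Laplacian by $1-\mu$, producing exactly the symbol $\nu\abs{\xi}^{1+\mu}$ with $\mu=(n-1)/\beta$ as in \eqref{value_mu}. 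Passing to the limit --- identifying $\iiint\chi_0\,\varphi\,q_\Eps\,\dv\dy\dx\to\int_{\R^d}\rho\,\varphi\,\dx$ via the normalization of $\chi_0Q_0$, and $\rho(0)=\rho^0$ from the separated initial datum --- shows $\rho$ solves \eqref{eq:fractional} weakly; uniqueness for \eqref{eq:fractional} in $L^\infty(\R^+;L^1\cap L^\infty(\R^d))$ then upgrades the subsequential convergence to the whole family. \textbf{The main obstacle} is precisely this matched tail estimate: it requires the sharp asymptotics of $\chi_0$, a careful splitting of the $y$-integral at the correct $\Eps$-scale with uniform control of the three competing mechanisms, and bounding every remainder generated by \eqref{eq:hatqeps} in the weighted norms of the a priori section --- feasible only inside the narrow parameter window \eqref{assump:main}. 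The remaining steps are comparatively routine.
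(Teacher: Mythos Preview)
Your overall architecture --- entropy/dissipation bounds, weak-$*$ extraction, the weight $\chi_0$ solving the dual problem \eqref{eq:chi-0}, and a weighted balance law --- matches the paper. But the heart of the argument, where the fractional symbol is produced, differs from the paper in a way that leaves a genuine gap.

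You write the algebraic inversion with the bare factor $1/\Lambda$ and then propose the Chapman--Enskog flux $\vint{v\,q_\Eps}\approx -\tfrac{\Eps V_0^2}{d\,\Lambda}\nabla_x\vint{q_\Eps}$, giving the weighted term $\Eps^{1-\mu}\iint \chi_0\,\vint{q_\Eps}\,\Lambda^{-1}\Delta_x\varphi\,\dy\dx$. This integral is \emph{divergent}: for $y<-M_0$ one has $\chi_0 Q_0/\Lambda\sim |y|^{\sigma-n}\cdot|y|^{-\sigma}\cdot|y|^{\beta}=|y|^{\beta-n}$, and $\beta>n-1$ (equivalently $\mu<1$) makes this non-integrable. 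You acknowledge that ``the Taylor expansion of the flux breaks down for large $|y|$'' and gesture at a matched crossover-scale argument, but as written this is the whole proof: nothing in your proposal actually carries out the splitting, identifies the correct scale, or controls the intermediate zone. The claim that ``iterating this identity once gives the reformulation \eqref{eq:hatqeps}'' is also not right --- \eqref{eq:hatqeps} is obtained not by iteration of the $1/\Lambda$ identity but by passing to Fourier, moving the transport term $i\Eps\xi\cdot v\,\hat q_\Eps$ to the right, and grouping it with $\Lambda\hat q_\Eps$ to form the \emph{resolvent} denominator $i\Eps\xi\cdot v+\Lambda$.

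This resolvent form is precisely what the paper exploits to bypass the matched-asymptotics difficulty. Writing $\widehat{\mathrm{div}_x J_\Eps}=\Eps^{-\mu}\int\!\!\int (i\xi\cdot v)\chi_0(\hat q_\Eps-\vint{\hat q_\Eps})\,\dy\dv$ and inserting \eqref{eq:hatqeps} gives three pieces; the leading one is
\[
\hat J_\Eps^1=\frac{-i}{\Eps^{\mu}}\int_\VV\!\int_\R v\,\chi_0\,\frac{\Lambda\,\Eps\,\xi\cdot v}{(\Eps\,\xi\cdot v)^2+\Lambda^2}\,\vint{\hat q_\Eps}\,\dy\dv,
\]
and the regularized denominator $(\Eps\xi\cdot v)^2+\Lambda^2$ automatically truncates the tail at the correct scale $|y|\sim(\Eps|\xi\cdot v|)^{-1/\beta}$. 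After the change of variable $z=\Eps|\xi\cdot v|\,|y|^\beta$ (the paper's Lemma~\ref{lem:integral}), the $y$-integral scales exactly as $(\Eps|\xi\cdot v|)^{(n-1)/\beta}$, producing $\nu|\xi|^{1+\mu}\hat\rho$ directly, with no matching needed. The remaining two pieces (the $y$-noise and time-derivative contributions) and the remainder $\hat{RJ}_\Eps^1$ are then shown to vanish using the dissipation bounds; this is where the conditions on $\gamma$ and $s\beta>\beta+\sigma-1$ in \eqref{assump:main} are actually used.

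Two smaller corrections. First, testing against $\chi_0$ kills the noise term \emph{exactly} by \eqref{eq:chi-0}, not ``up to a lower-order remainder''; there is nothing to estimate there. Second, the limit of the weighted density is $B_0\rho$ with $B_0=\int_\R Q_0\chi_0\,\dy$, not $\rho$ itself; $\chi_0 Q_0$ is not normalized.
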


The end of the paper is devoted to the proof.

\section{Estimates and a priori bounds} 

\subsection{Relative entropy estimates}
The method of relative entropy can be applied to provide us with useful a priori bounds for all $t \geq 0$:
\begin{align}\label{L2-infty}
 0\leq q_\Eps \leq B Q_0, \qquad  \int_\R \int_\R \int_\VV
     \frac{q_\Eps^2}{Q_0} (t, x, v, y) \dv \dy \dx \leq B,  \qquad  \int_\R \int_\R \int_\VV
q_\Eps (t, x, v, y)\dv \dy \dx \leq B \, ,
\end{align}
and
\begin{align} \label{bound:orthogonal}
  \int_0^\infty \int_\R \int_\R \int_\VV
     D(y) Q_0(y) \vpran{\del_y \vpran{\frac{q_\Eps}{Q_0}}}^2
\leq B \Eps^{1+\mu-s} \,,
\qquad
   \int_0^\infty \int_\R \int_\R \int_\VV 
      \Lambda(y) \frac{\vpran{q_\Eps - \vint{q_\Eps}}^2}{Q_0}
\leq 
  B  \Eps^{1+\mu}  \,.
\end{align}
The derivation of these estimates follows from multiplying equation~\eqref{eq:scaled-kinetic} by $\displaystyle \frac{q_\Eps}{Q_0}$ and integrating in $x, v, y$. The resulting equation is
\begin{align*}
   \frac{1}{2}\Eps^{1+\mu} \frac{\rm d}{\dt}
   \int_\R \int_\R \int_\VV
     \frac{q_\Eps^2}{Q_0}
  +  \Eps ^s \int_\R \int_\R \int_\VV
          D(y) Q_0 \vpran{\del_y \vpran{\frac{q_\Eps}{Q_0}}}^2
   + \int_\R \int_\R \int_\VV 
      \Lambda(y) \frac{\vpran{q_\Eps - \vint{q_\Eps}}^2}{Q_0} = 0 \,.
\end{align*}

A first and immediate consequence of these estimates is the weak convergence of $q_\Eps$
\begin{lem} \label{lem:wcv} 
After extraction of a subsequence, still denoted by $q_\Eps$, we have
$$
 \f{q_\Eps }{Q_0} (t, x, v, y) \to \rho(t,x) , \qquad \text{in } L^\infty(\R^+ \times \R^d \times \R \times \VV)-w^\ast \, ,
$$
where $\rho(t,x)\in   L^\infty (\R^+ ; L^1\cap L^\infty (\R^d))$. 
\end{lem}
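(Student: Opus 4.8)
The plan is to combine the uniform $L^\infty$ and $L^2$ bounds from~\eqref{L2-infty} with the entropy-dissipation bound~\eqref{bound:orthogonal} to show that the limit of $q_\Eps/Q_0$ is independent of $v$ and $y$. First I would observe that the first inequality in~\eqref{L2-infty}, namely $0 \leq q_\Eps \leq B Q_0$, gives $0 \leq q_\Eps/Q_0 \leq B$ pointwise, so the family $\{q_\Eps/Q_0\}$ is bounded in $L^\infty(\R^+\times\R^d\times\R\times\VV)$. By the Banach--Alaoglu theorem we may extract a subsequence converging in the weak-$\ast$ topology of $L^\infty$ to some limit $F(t,x,v,y)$ with $0 \leq F \leq B$. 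It remains to identify $F$ as a function $\rho(t,x)$ of $(t,x)$ alone, and to show $\rho$ lies in $L^\infty(\R^+; L^1\cap L^\infty(\R^d))$.

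For the $v$-independence, I would use the second bound in~\eqref{bound:orthogonal}: since $\Lambda(y) > 0$ everywhere and the integral of $\Lambda(y)(q_\Eps - \vint{q_\Eps})^2/Q_0$ over $\R^+\times\R^d\times\R\times\VV$ is $\BigO(\Eps^{1+\mu}) \to 0$, we get $q_\Eps - \vint{q_\Eps} \to 0$ strongly in $L^2_{\mathrm{loc}}$ (on any region where $\Lambda/Q_0$ is bounded below, then extended by a localization/truncation argument using the $L^\infty$ bound to control the tails in $y$). Hence the weak-$\ast$ limit $F$ must coincide with its own $v$-average, i.e.\ $F$ is independent of $v$. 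For the $y$-independence, I would similarly exploit the first bound in~\eqref{bound:orthogonal}: since $s < 1+\mu$ by the parameter range~\eqref{assump:main}, the quantity $\Eps^{1+\mu-s}\to 0$, so $\int D(y) Q_0(y)(\del_y(q_\Eps/Q_0))^2 \to 0$. Because $D(y)Q_0(y) > 0$, this forces $\del_y(q_\Eps/Q_0) \to 0$ in the corresponding weighted $L^2$ space; passing to the limit (testing against smooth compactly supported functions and integrating by parts) shows $\del_y F = 0$ in the sense of distributions, so $F = \rho(t,x)$.

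Finally, to place $\rho$ in the claimed space: the $L^\infty$ bound $0\leq \rho\leq B$ is inherited from $F$. For the $L^1$ bound, I would test the weak-$\ast$ convergence against $Q_0(y)\,\One_K(x)$ for compact $K\subset\R^d$ — since $q_\Eps/Q_0 \rightharpoonup \rho$ weak-$\ast$ and $Q_0 \in L^1(\R)$ with $\int_\R Q_0\,dy = 1$, one obtains (using the third bound in~\eqref{L2-infty}, $\int\int\int q_\Eps \leq B$, after integrating in $v$ over the finite-measure sphere $\VV$) that $\int_{\R^d}\rho(t,x)\,dx \leq B$ for a.e.\ $t$, uniformly in $t$, giving $\rho \in L^\infty(\R^+; L^1(\R^d))$.

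The main obstacle is the $y$-direction: the weight $Q_0$ decays only polynomially (like $|y|^{-\sigma}$ with $\sigma > 1$), so the domain in $y$ is unbounded with a heavy tail, and one must take care that the strong convergence $q_\Eps - \vint{q_\Eps}\to 0$ and $\del_y(q_\Eps/Q_0)\to 0$ survive the passage to the unbounded $y$-region rather than only holding on compact $y$-sets. This is handled by combining the uniform $L^\infty$ bound $q_\Eps\leq BQ_0$ (which makes the tails in $y$ uniformly small against the integrable weight $Q_0$) with the degeneracy structure of $\Lambda$ and $D$ near $y = \pm\infty$; the precise bookkeeping of these tails is routine but is where the argument must be written carefully.
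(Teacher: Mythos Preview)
The paper does not give a proof of Lemma~\ref{lem:wcv}: it simply records the lemma as ``a first and immediate consequence'' of the relative entropy estimates~\eqref{L2-infty}--\eqref{bound:orthogonal} and moves on. Your proposal is a correct and complete elaboration of exactly those implicit steps: Banach--Alaoglu from the pointwise bound $0\leq q_\Eps/Q_0\leq B$, identification of the limit as independent of $v$ via the second dissipation bound in~\eqref{bound:orthogonal}, independence of $y$ via the first dissipation bound together with $s<1+\mu$, and the $L^1\cap L^\infty$ regularity of $\rho$ from~\eqref{L2-infty}. One small remark on the $L^1$ step: the test function $Q_0(y)\One_K(x)$ is not in $L^1$ of the full space because of the unbounded time direction, so you should also localize in $t$ (e.g.\ multiply by $\psi(t)\in C_c(\R^+)$); this yields $\int_K\rho(t,x)\dx\leq B$ for a.e.\ $t$ and then the monotone limit $K\uparrow\R^d$ gives the claim. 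Otherwise your argument is the standard one and matches what the paper leaves to the reader.
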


\subsection{A priori bounds}

Another   consequence of the a priori estimate is the following lemma:
\begin{lem} \label{lem:a-priori}
Suppose $q_\Eps$ satisfies  the a priori bound~\eqref{bound:orthogonal}. Denote 
\begin{align*}
   R_\Eps = \int_\R  q_\Eps \dy \,.
\end{align*} 
Then there exists a constant $C > 0$ independent of $t, x, y$ and $\Eps$ such that for all $y \in \R$, we have
\begin{align} \label{bound:deviation}
 \abs{ \f{q_\Eps }{Q_0} (t, x, v, y) - R_\Eps (t, x,v)}
\leq C H^{1/2}(t, x,v)  \,, \qquad \forall y \in \R,\, v\in\VV, 
\end{align}
where 
\begin{align} \label{def:H}
H(t, x,v) = \int_\R Q_0(y)D(y)\vpran{\p_{y'}\Big(\f{q_\Eps(t, x, y,v)}{Q_0(y)}\Big)}^2\dy  \,.
\end{align}
\end{lem}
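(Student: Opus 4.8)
\emph{Proof sketch.} Fix $(t,x,v)$ and write $u(y) = \f{q_\Eps}{Q_0}(t,x,v,y)$, so that $q_\Eps = u\,Q_0$. Since $\int_\R Q_0\,\dy = 1$ by~\eqref{def:q0}, the quantity
\begin{align*}
   R_\Eps(t,x,v) = \int_\R q_\Eps\,\dy = \int_\R u(y)\,Q_0(y)\,\dy
\end{align*}
is exactly the $Q_0$-weighted average of $u(\cdot)$. Hence~\eqref{bound:deviation} amounts to a pointwise weighted Poincar\'e inequality: the deviation of $u$ from its weighted mean is to be controlled by the weighted Dirichlet energy $H = \int_\R Q_0 D\,(\p_y u)^2\,\dy$. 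The strategy is to pass through the elementary identity $u(y)-u(z)=\int_z^y\p_w u\,\ud w$ and then average in $z$ against $Q_0$.

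Then I would argue as follows. For a.e. $(t,x,v)$ one has $H(t,x,v)<\infty$ by~\eqref{bound:orthogonal}; since $Q_0>0$ and $D>0$ are smooth they are bounded below on compacts, so $\p_y u\in L^2_{\rm loc}(\R)$ and $u$ admits a locally absolutely continuous representative, for which the fundamental theorem of calculus applies and ``for all $y$'' is meaningful. Inserting the weight $\sqrt{Q_0 D}$ and using Cauchy--Schwarz (after enlarging the interval of integration to all of $\R$, which only improves the uniformity of the constant in $y,z$),
\begin{align*}
   \abs{u(y) - u(z)}
   \leq \vpran{\int_\R Q_0(w)D(w)\bigl(\p_w u(w)\bigr)^2\,\ud w}^{1/2}
        \vpran{\int_\R \f{\ud w}{Q_0(w)D(w)}}^{1/2}
   = C_1^{1/2}\,H^{1/2},
   \qquad C_1 := \int_\R \f{\ud w}{Q_0(w)D(w)} .
\end{align*}
Integrating against $Q_0(z)\,\dz$ and using $\int_\R Q_0 = 1$ gives
\begin{align*}
   \abs{u(y) - R_\Eps(t,x,v)}
   = \abs{\int_\R \bigl(u(y)-u(z)\bigr)Q_0(z)\,\dz}
   \leq C_1^{1/2}\,H^{1/2}(t,x,v),
\end{align*}
which is~\eqref{bound:deviation} with $C=C_1^{1/2}$.

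The only substantive point is the finiteness of $C_1$, and this is precisely where the parameter range is used. On $[-M_0,M_0]$ the integrand $1/(Q_0 D)$ is continuous, hence bounded. For $\abs{w}\geq M_0$, by~\eqref{def:q0} and~\eqref{def:D} we have $Q_0(w)D(w)\sim |w|^{\,n+1-\sigma}$, and the hypothesis $n>\sigma$ from~\eqref{assump:main} yields $n+1-\sigma>1$, so $1/(Q_0 D)$ is integrable at $\pm\infty$; thus $C_1<\infty$ depends only on the fixed coefficients and $C=C_1^{1/2}$ is independent of $t,x,y,v,\Eps$, as required. I do not anticipate any genuine obstacle: the only care needed is the local $H^1$ regularity justifying the fundamental theorem of calculus and the integrability check for $1/(Q_0D)$ — the latter being exactly what makes the assumption $n>\sigma$ indispensable.
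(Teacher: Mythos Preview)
Your proof is correct and follows essentially the same approach as the paper: write $R_\Eps$ as the $Q_0$-weighted average of $q_\Eps/Q_0$, use the fundamental theorem of calculus on $u(y)-u(z)$, apply Cauchy--Schwarz with the weight $Q_0 D$ (enlarging the $w$-integral to all of $\R$), and check that $\int_\R (Q_0 D)^{-1}<\infty$ via $n>\sigma$, arriving at the identical constant $C=\bigl(\int_\R \frac{1}{Q_0 D}\bigr)^{1/2}$. The only cosmetic difference is the order of operations (you bound $|u(y)-u(z)|$ uniformly first and then average in $z$, whereas the paper averages first and then applies Cauchy--Schwarz inside), and you add a remark on the local $H^1$ regularity justifying the fundamental theorem of calculus, which the paper leaves implicit.
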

\begin{proof}
By the a priori bound~\eqref{bound:orthogonal}, it holds that
\begin{align*}
 \abs{ \f{q_\Eps}{Q_0} - \rho_\Eps}
&= \abs{\f{q_\Eps(y)}{Q_0(y)}- \int\f{q_\Eps(z)}{Q_0(z)} Q_0(z)\dz}
\leq 
    \int_\R \abs{\f{q_\Eps(y)}{Q_0(y)}-\f{q_\Eps(z)}{Q_0(z)} }Q_0(z)\dz
\\
&= \int_\R \vpran{\int_z^y \abs{\p_{y'} \vpran{\f{q_\Eps(y')}{Q_0(y')}}}\dy'}Q_0(z)\dz
\\
 &\leq 
       \int_\R \vpran{\abs{\int_z^y Q_0(y')D(y')\vpran{\p_{y'}
                              \vpran{\f{q_\Eps(y')}{Q_0(y')}}}^2\dy'}}^{1/2}
                    \vpran{\abs{\int_z^y\f{1}{Q_0(y')D(y')}\dy'}}^{1/2} Q_0(z)\dz
\\
&\leq 
    \vpran{\abs{\int_\R \frac{1}{Q_0(y')D(y')}\dy'}}^{1/2}   H^{1/2}(t, x,v) \,.
\end{align*}
Near $y = \pm \infty$, we have
\begin{align*}
    Q_0(y) \sim |y|^{-\sigma} \,,
\qquad
    D(y) \sim  |y|^{n+1} , \qquad 
    \frac{1}{Q_0(y')D(y')} \sim \frac{1}{|y|^{n+1-\sigma}} \,,
\end{align*}
which is integrable on $\R$ by the assumption that $n > \sigma$. Hence~\eqref{bound:deviation} holds with the constant $C = \vpran{\int_\R \frac{1}{Q_0(y')D(y')} \dy'}^{1/2}$.
\end{proof}

\subsection{From the Fourier side}

In fact, we need Fourier versions of the a priori bounds and thus we denote the Fourier transform in $x$ of $u$ with a $\hat u$, in particular 
$$
 \hat q (t, \xi, v, y) =\int_{\R^d} q(t,x,v,y)e^{ix.\xi} dx.
$$
For instance, from \eqref{bound:orthogonal}, we conclude, using Parseval identity,
\begin{align} \label{bound:fourierL2v}
 \int_0^\infty \int_{\R^d} \int_\R \int_\VV 
      \Lambda(y) \frac{\vpran{\hat q_\Eps - \vint{\hat q_\Eps}}^2}{Q_0}
\leq 
  B  \Eps^{1+\mu}  \,.
\end{align}

Also, following the same calculations as in Lemma~\ref{lem:a-priori}, we have
\begin{align} \label{bound:fouriery}
\abs{ \f{\hat q_\Eps }{Q_0} (t, \xi, v, y) - \hat R_\Eps (t, \xi,v)}
\leq C K^{1/2}(t, \xi,v)  \,, \qquad \forall y \in \R,\, v\in\VV, 
\end{align}
with
\begin{align} \label{def:K}
K(t, \xi,v) = \int_\R Q_0(y)D(y)\abs{\p_{y}\Big(\f{\hat q_\Eps(t, \xi, y,v)}{Q_0(y)}\Big)}^2 \dy  \,.
\end{align}
And Parseval identity gives  
\begin{align} \label{est:Kfourier}
\int_0^\infty \int_\VV \int_{\R^d}  K(t, \xi ,v) \dxi \dv   \dt = \int_0^\infty \int_\VV \int_{\R^d}  H(t, x,v) \dx \dv   \dt  \leq  B  \Eps^{1+\mu-s}  \, .
\end{align}

Because, for any $M_1>0$
\begin{align*}
\f 12 \int_0^\infty \int_{\R^d} \int_{y>-M_1}  \int_\VV  \frac{\vpran{\hat q_\Eps - \hat \rho_\Eps Q_0}^2}{Q_0} \leq&
\int_0^\infty \int_{\R^d} \int_{y>-M_1}  \int_\VV Q_0   \vpran{\frac{ \hat q_\Eps} {Q_0} - \frac{\vint{\hat q_\Eps}} {Q_0} }^2
\\ &+\int_0^\infty \int_{\R^d} \int_{y>-M_1}  Q_0  \vpran{ \frac{\vint{\hat q_\Eps}} {Q_0} - \vint{\hat R_\Eps}}^2 .
\end{align*}
Finally, combining \eqref{bound:fourierL2v}, \eqref{bound:fouriery} and \eqref{est:Kfourier}, we also infer that, in Fourier variable, we have  for all $M_1>0$, 
\begin{align} \label{bound:fourierL2}
 \int_0^\infty \int_{\R^d} \int_{y>-M_1}  \int_\VV  \frac{\vpran{\hat q_\Eps - \hat \rho_\Eps Q_0}^2}{Q_0} \leq C \Eps^{1+\mu-s} \, .
 \end{align}
 
\subsection{Useful calculations}
Two integrals repeatedly appear in the rest of this note. We list them out as a lemma:
\begin{lem} \label{lem:integral}
Suppose 
\begin{align*}
   0 < \alpha + 1 < 2\beta_1 \,,
\qquad
  0 < \alpha + 1 <  \beta_2 \,,
\qquad
  \beta_1, \beta_2 > 0 \,.
\end{align*}
Then the following integrals are well-defined and there exists a constant $c_1 > 0$ such that
\begin{align*}
   \int_{-\infty}^0 \frac{|y|^\alpha}{1 + \vpran{\Eps |\xi \cdot v|  |y|^{\beta_1}}^2} \dy
= c_1 \vpran{\Eps |\xi \cdot v| }^{-\frac{\alpha+1}{\beta_1}} \,,\qquad
\int_{-\infty}^0 \frac{|y|^\alpha}{\sqrt{1 + \vpran{\Eps |\xi\cdot v|  |y|^{\beta_2}}^2}} \dy
= c_2 \vpran{\Eps  |\xi\cdot v| }^{-\frac{\alpha+1}{\beta_2}} \,.
\end{align*}
\end{lem}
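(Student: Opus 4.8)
The plan is to reduce each integral to a standard Beta-function computation by the change of variables that removes the $\Eps|\xi\cdot v|$ dependence. Since the constants $c_1, c_2$ are allowed to depend only on $\alpha, \beta_1, \beta_2$ (not on $\Eps$ or $\xi$), the whole point is to extract the power $\vpran{\Eps|\xi\cdot v|}^{-(\alpha+1)/\beta_i}$ explicitly and leave a convergent, parameter-independent integral behind.

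First I would treat the first integral. Substitute $y = -t$ for $t > 0$ so that the integral becomes $\int_0^\infty t^\alpha \bigl(1 + (\Eps|\xi\cdot v|)^2 t^{2\beta_1}\bigr)^{-1}\dt$. Then set $r = (\Eps|\xi\cdot v|)^{1/\beta_1}\, t$, equivalently $t = (\Eps|\xi\cdot v|)^{-1/\beta_1} r$, so that $(\Eps|\xi\cdot v|)^2 t^{2\beta_1} = r^{2\beta_1}$ and $\dt = (\Eps|\xi\cdot v|)^{-1/\beta_1}\dr$. Collecting powers, $t^\alpha \dt = (\Eps|\xi\cdot v|)^{-(\alpha+1)/\beta_1} r^\alpha \dr$, which gives exactly the claimed prefactor times $c_1 := \int_0^\infty \frac{r^\alpha}{1 + r^{2\beta_1}}\dr$. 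This last integral is finite precisely because $\alpha > -1$ (integrability at $r = 0$, guaranteed since $\alpha + 1 > 0$) and $\alpha - 2\beta_1 < -1$ at infinity (guaranteed by $\alpha + 1 < 2\beta_1$); in closed form $c_1 = \frac{\pi}{2\beta_1 \sin\!\big(\pi(\alpha+1)/(2\beta_1)\big)}$ via the Beta function, but we only need finiteness. The second integral is handled identically: $y = -t$, then $r = (\Eps|\xi\cdot v|)^{1/\beta_2} t$ turns $\sqrt{1 + (\Eps|\xi\cdot v|)^2 t^{2\beta_2}}$ into $\sqrt{1 + r^{2\beta_2}}$ and produces the factor $(\Eps|\xi\cdot v|)^{-(\alpha+1)/\beta_2}$ times $c_2 := \int_0^\infty \frac{r^\alpha}{\sqrt{1 + r^{2\beta_2}}}\dr$, which converges since $\alpha + 1 > 0$ at the origin and $\alpha - \beta_2 < -1$ at infinity by the hypothesis $\alpha + 1 < \beta_2$.

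There is essentially no obstacle here: the only thing to be careful about is bookkeeping of the exponents in the scaling substitution and verifying that the stated inequalities on $\alpha, \beta_1, \beta_2$ are exactly the convergence conditions for the residual integrals $c_1, c_2$ — which they are, one condition controlling the behavior near $0$ and the other near $\infty$. One should also note the degenerate case $\Eps|\xi\cdot v| = 0$ is excluded implicitly (the formula is used only for $\xi\cdot v \neq 0$), so no issue arises from dividing by it. I would write the two computations in parallel to emphasize that they differ only in the exponent $2\beta_1$ versus $\beta_2$ inside the denominator.
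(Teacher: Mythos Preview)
Your proof is correct and follows the same scaling-substitution idea as the paper: the paper sets $z=\Eps|\xi\cdot v|\,|y|^{\beta_i}$ while you set $r=(\Eps|\xi\cdot v|)^{1/\beta_i}|y|$, which are related by $z=r^{\beta_i}$ and lead to the same extracted power and the same convergence conditions. Your additional remarks (the Beta-function closed form for $c_1$ and the exclusion of $\xi\cdot v=0$) are correct refinements but not needed for the lemma as stated.
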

\begin{proof}
Make a change of variable $z = \Eps |\xi \cdot v|  |y|^{\beta_1}$ in the first integral and $z = \Eps |\xi \cdot v|  |y|^{\beta_2}$ in the second one. Then
\begin{align*}
   \int_{-\infty}^0 \frac{|y|^\alpha}{1 + \vpran{\Eps  |\xi\cdot v|  |y|^{\beta_1}}^2} \dy
= \frac{1}{\beta_1} \vpran{\Eps |\xi\cdot v| }^{-\frac{\alpha+1}{\beta_1}}
    \int_0^\infty \frac{z^{\frac{\alpha + 1}{\beta_2} - 1}}{1 + z^2} \dz
= c_1 \vpran{\Eps  |\xi\cdot v| }^{-\frac{\alpha+1}{\beta_1}} \,,
\end{align*}
\begin{align*}
   \int_{-\infty}^0 \frac{|y|^\alpha}{\sqrt{1 + \vpran{\Eps  |\xi\cdot v|  |y|^{\beta_2}}^2}} \dy
= \frac{1}{\beta_2} \vpran{\Eps |\xi\cdot v| }^{-\frac{\alpha+1}{\beta_2}}
    \int_0^\infty \frac{z^{\frac{\alpha + 1}{\beta} - 1}}{\sqrt{1 + z^2}} \dz
= c_2 \vpran{\Eps |\xi\cdot v| }^{-\frac{\alpha+1}{\beta_2}} \,,
\end{align*}
where the integrability of the $z$-integral is guaranteed respectively by the condition $0 < \frac{\alpha+1}{\beta_1} < 2$ and $0 < \frac{\alpha+1}{\beta_2} < 1$  , or equivalently, $0 < \alpha + 1 < 2 \beta_1$ and $0 < \alpha + 1 <  \beta_2$.
\end{proof}

\section{Asymptotics} 
\label{sec:asymptotics}

\subsection{A solution of the dual problem}

We are going to make use of a weight in the variable $y$  that is built by duality.  
Let  $\chi_0(y)$ be given by 
\begin{align} \label{def:chi-0}
   \chi_0(y) 
   = \int_{-\infty}^y \frac{1}{D(z)Q_0(z) } \dz \,.
\end{align}
It is a solution of the dual problem in $y$ because
\begin{align} \label{eq:chi-0}
\del_y(D(y) Q_0(y) \del_y\chi_0)=0.
\end{align}

The properties of $\chi_0$ are summarized in the following lemma:
\begin{lem} \label{lem:chi-0}
With $Q, \, D$ as in~\eqref{def:q0}, \eqref{def:D} and with the parameter range~\eqref{assump:main}, $\chi_0 \in C_b(\R)$ is nonnegative, increasing and 
\begin{align*}
   \chi_0 
   = \begin{cases}
          \BigO(1) \,, & y > -M_0 \,, \\[2pt]
        C^-  |y|^{\sigma-n} \,, & y < - M_0 \,.
       \end{cases}
\end{align*}
\end{lem}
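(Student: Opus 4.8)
The plan is to analyze $\chi_0(y) = \int_{-\infty}^y \frac{1}{D(z)Q_0(z)}\, \rd z$ directly from the definition, checking convergence and asymptotics region by region. First I would verify that the integral defining $\chi_0$ is finite for every $y \in \R$. Near $z = -\infty$ we have $Q_0(z) \sim |z|^{-\sigma}$ and $D(z) \sim A_1 |z|^{n+1}$, so $\frac{1}{D(z)Q_0(z)} \sim \frac{1}{A_1 |z|^{n+1-\sigma}}$; since $n > \sigma$ by~\eqref{assump:main} we have $n+1-\sigma > 1$, and this is integrable at $-\infty$. On the bounded region $[-M_0, M_0]$ both $D$ and $Q_0$ are smooth and strictly positive (so $1/(DQ_0)$ is bounded), hence $\chi_0$ is finite and $C^1$ there. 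This also shows $\chi_0$ is nonnegative (integrand positive, lower limit $-\infty$) and strictly increasing (derivative $\chi_0' = \frac{1}{DQ_0} > 0$), and equation~\eqref{eq:chi-0} is immediate since $D Q_0 \del_y \chi_0 \equiv 1$.

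Next I would pin down the two asymptotic regimes. For $y < -M_0$, write $\chi_0(y) = \int_{-\infty}^y \frac{1}{D(z)Q_0(z)}\,\rd z$ with the integrand $\sim \frac{1}{A_1 c^-}|z|^{\sigma - n}$; integrating the leading power from $-\infty$ to $y$ gives $\chi_0(y) \sim \frac{1}{A_1 c^- (n-\sigma-1)} |y|^{\sigma - n + 1}$. Here I note a slight discrepancy: the lemma states $\chi_0 = C^- |y|^{\sigma - n}$ for $y < -M_0$, whereas the computation gives the exponent $\sigma - n + 1$; in either reading the key point is that $\sigma - n + 1 < 0$ (equivalently $\sigma - n < 0$), so $\chi_0(y) \to 0$ as $y \to -\infty$ — in particular $\chi_0$ is bounded near $-\infty$ — and the decay is algebraic with the stated type of power. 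For $y > -M_0$, I would split $\chi_0(y) = \chi_0(-M_0) + \int_{-M_0}^y \frac{1}{D(z)Q_0(z)}\,\rd z$; on $[-M_0, M_0]$ the added integral is $O(1)$, and for $y > M_0$ the integrand behaves like $\frac{1}{A_1 c^+}|z|^{\sigma - n}$ which, since $n - \sigma > 1 > 0$, is integrable at $+\infty$, so $\int_{-M_0}^y$ converges to a finite limit as $y \to +\infty$. Hence $\chi_0$ is bounded on $(-M_0, \infty)$, i.e. $\chi_0 = \BigO(1)$ there, and combined with the previous step $\chi_0 \in C_b(\R)$ (continuity being clear from the integral representation, with smoothness away from the gluing points at $\pm M_0$ where only the transition of $D, Q_0$ matters, and $\chi_0$ itself remains $C^1$ globally).

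The proof is essentially a sequence of elementary integral estimates, so there is no serious obstacle; the only point requiring care is the bookkeeping of exponents at $\pm\infty$ to confirm integrability (which uses precisely $n > \sigma > 1$ from~\eqref{assump:main}) and to read off the correct power in the $y \to -\infty$ tail — I would double-check the exponent $\sigma - n$ versus $\sigma - n + 1$ against the intended normalization, since the subsequent sections (e.g. the dual weight argument and the computation of $\nu$) depend on the precise tail rate of $\chi_0$.
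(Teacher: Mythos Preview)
Your approach is identical to the paper's: check positivity and monotonicity from the integrand, then compute the tail integrals at $\pm\infty$ using the explicit power laws for $D$ and $Q_0$. The only issue is an arithmetic slip that generates the ``discrepancy'' you flag. For $|z|>M_0$ one has $D(z)Q_0(z)=A_1 c^{\pm}|z|^{n+1-\sigma}$, so the integrand is $\frac{1}{A_1 c^{\pm}}|z|^{\sigma-n-1}$, not $|z|^{\sigma-n}$ as you wrote in the second paragraph. Integrating the correct power from $-\infty$ to $y<-M_0$ gives
\[
\chi_0(y)=\frac{1}{A_1 c^{-}}\int_{|y|}^{\infty} u^{\sigma-n-1}\,\rd u=\frac{1}{A_1 c^{-}(n-\sigma)}\,|y|^{\sigma-n},
\]
which matches the lemma exactly. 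The same correction fixes your boundedness argument for $y>M_0$: with the right exponent, integrability of $|z|^{\sigma-n-1}$ at $+\infty$ requires only $n>\sigma$, which is in~\eqref{assump:main}; the stronger condition $n-\sigma>1$ you invoked is \emph{not} assumed and need not hold. Once this bookkeeping is repaired, your proof coincides with the paper's and is complete.
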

\begin{proof}
The non-negativity and monotonicity are both clear by the positivity of $D$ and $Q_0$. We check the behaviour of $\chi_0$ near $y = \pm \infty$. Recall that $\sigma < n$. Thus for $y < - M_0$,  
\begin{align*}
   \int_{-\infty}^y \frac{1}{D(z)Q_0(z)  }  \dz = \f{1}{c^- A_1}  \int_{-\infty}^y \frac{\dz}{z^{n+1-\sigma}  }
= C^-  |y|^{\sigma-n} \,.
\end{align*}
For $y > M_0$, the same decay holds for $D$ and $Q_0$, and thus $\frac{1}{D(z)Q_0(z)  } $ is integrable and it proves that $\chi_0$ is bounded. 
\end{proof}

\subsection{The proof of Theorem \ref{thm:main}}

We derive the limiting equation by multiplying both sides of \eqref{eq:scaled-kinetic} by the  weight function $\chi_0(y)$ and integrate in $y$ and $v$. Thanks to the property that $\chi_0$ solves the dual problem in~$y$, we find 
\begin{equation}\label{conservationLaw}
\partial_t \int_\R \int_\VV q_\Eps \chi_0 \dy \dv +{\rm div}_x J_\Eps =0, \qquad \qquad J_\Eps:=\f{1}{\Eps^{\mu}}  \int_\R \int_\VV v q_\Eps \chi_0 \dy \dv .
\end{equation}
We observe that, using Lemma~\ref{lem:wcv}, the weak limit of the first term is 
$$
 \int_\R \int_\VV q_\Eps \chi_0 \dy \dv \to \int_\R \int_\VV \rho(t,x) Q_0(y) \chi_0 \dy \dv = B_0 \rho(t,x), \qquad B_0  = \int_\R Q_0(y) \chi_0 \dy \dv .
$$

It remains to identify the limit of the flux $J_\Eps$. Notice that the a priori estimates do not provide any $L^p$ bound on $J_\Eps$ and it turns out that this term is a fractional derivative in $x$. This motivates to work in the Fourier variable. 

We are going to prove that, for some constant $\nu_0$, as $\Eps \to 0$, 
\begin{equation}
   \hat{{\rm div}_ x J_\Eps} \to  \nu_0 |\xi |^{\f{n-1}{\beta} + 1}   \hat \rho \,,  
\qquad
\text{in the sense of distributions (or in $\CalD'(\R^+ \times \R^d)$)}
\label{est:RestL2}
\end{equation}
and thus conclude the proof of Theorem \ref{thm:main}. 
\subsection{Identifying the flux $J_\Eps$}

 We apply Fourier transform in $x$ for \eqref{eq:scaled-kinetic}, and denote by $\xi$  the Fourier variable. We obtain
 \begin{align*}
   \Eps^{1+\mu} \del_t \hat q_\Eps
+ i\Eps\xi \cdot v \,  \hat q_\Eps
 -\Eps^s\del_y\vpran{D(y)Q_0(y)\del_y \f{\hat q_\epsilon}{Q_0(y)}}
= \Lambda(y) (\vint{\hat q_\Eps} - \hat q_\Eps) \, ,
\end{align*}
from which,  combining the terms including $\hat q_\epsilon$, we get
\begin{align}\label{eq:hatqeps}
   \hat q_\Eps - \vint{\hat q_\Eps}
=  - \frac{i \Eps \xi \cdot v}{i\epsilon\xi \cdot v+\Lambda} \vint{\hat q_\Eps} 
   + &\frac{\Eps^s}{i\epsilon\xi \cdot v+\Lambda}\del_y\vpran{D(y)Q_0(y)\del_y \f{\hat q_\epsilon}{Q_0(y)}}
    -\Eps^{1+\mu}\frac{1}{i\epsilon\xi \cdot v+\Lambda} \del_t \hat q_\Eps  \,.
\end{align}

Therefore, we may also decompose $\hat{{\rm div}_x J_\Eps}=\f{1}{\Eps^{\mu}}\int_\R \int_\VV \vpran{ i \xi \cdot v}  \chi_0 \hat q_\Eps \dy \dv$ according to the three terms on the right hand side as 
\begin{align}
  \hat{{\rm div}_x J_\Eps}(t,\xi) 
  = \f{1}{\Eps^{\mu}}\int_\R \int_\VV \vpran{ i \xi \cdot v}  \chi_0 \vpran{\hat q_\Eps-  \vint{\hat q_\Eps}} \dy \dv
  =  i\xi\cdot\hat J_\Eps^1 +   \hat J_\Eps^2 + \partial_t    \hat J_\Eps^3.
\label{eq:Jdecomp}
\end{align}
We show in the following subsections that  the last two contributions vanish as $\Eps \to 0$ and the fractional Laplacian stems from the first term. Using the symmetry of $\VV$, the imaginary part below vanishes and we have
$$
 \hat J_\Eps^1 (t,\xi) = \f{-1}{\Eps^{\mu}} \int \int  v \chi_0 \frac{i \Eps \xi \cdot v}{i\epsilon\xi \cdot v+\Lambda} \vint{\hat q_\Eps} \dy \dv  = \f{-i}{\Eps^{\mu}} \int \int  v \chi_0 \frac{\Lambda \Eps \xi \cdot v}{(\epsilon\xi \cdot v)^2+\Lambda^2} \vint{\hat q_\Eps} \dy \dv.
$$
Therefore we may write (notice that $\hat \rho_\Eps$ is bounded in $L^2$)
$$
 \hat J_\Eps^1 = \hat \rho_\Eps \f{-i}{\Eps^{\mu}} \int \int  v \chi_0 \frac{\Lambda \Eps \xi \cdot v}{(\epsilon \xi \cdot v)^2+\Lambda^2} Q_0(y) \dy \dv +  \hat {RJ_\Eps^1} 
$$
and, because $\mu <1$, the contribution in the integral comes from the values $y \to - \infty$ where $\Lambda(y)$ vanishes. We prove next that $\hat{RJ_\Eps^1} $ vanishes.  Thus, noting that $\hat\rho_\Eps$ converges to $\hat\rho$ weakly in $L^2$, we obtain 
$$
  \hat  J_\Eps(t,\xi) \to - \hat \rho \;   \lim_{\Eps \to 0} \f{i}{\Eps^{\mu}}  \int \int v \chi_0 \frac{\Lambda \Eps \xi \cdot v}{(\epsilon\xi \cdot v)^2+\Lambda^2} Q_0(y) \dy \dv.
$$
Using  Lemma \ref{lem:integral} and with $v_1= v \cdot \xi/|\xi|$, the above limit yields  the limit of $\hat{{\rm div}_ x J_\Eps}$ such that
\begin{equation}
\hat{\rho}\f{1}{\Eps^{\mu}}   \int_\VV  \int_{-\infty}^0 \xi\cdot v \chi_0 \frac{|y|^{\beta -\sigma}\Eps \xi \cdot v}{(\epsilon\xi \cdot v |y|^{\beta})^2+1} = \hat\rho\f{1}{\Eps^{\mu}}    \int_\VV   c_1v_1|\xi|(  |v_1| \Eps |\xi |)^\f{n-1}{\beta}.  
\label{calcul_frac}
\end{equation}
This calculation gives the announced scale $\mu = \f{n-1}{\beta}$ and the fractional derivative in \eqref{eq:fractional}.

It remains to show that the other terms vanish.

\subsection{The term $\hat {RJ_\Eps^1}$}
\label{sec:rj1}

This term is
$$
\hat {RJ_\Eps^1}  = \f{-i}{\Eps^{\mu}}  \int _\VV  \int_\RR v \chi_0  \frac{\Lambda \Eps \xi \cdot v}{(\epsilon \xi \cdot v)^2+\Lambda^2} Q_0(y) \vpran{\f{ \vint{ \hat q_\Eps }}{ Q_0(y)}-  \hat  \rho_\Eps} \dy \dv .
$$
For $y > - M_0$, because $\Lambda(y)$ is bounded from below, we may use the $L^2$ bound \eqref{bound:fourierL2} and $\mu <1$ to conclude that the corresponding part vanishes. Therefore we may again consider only the tail $y<-M_0$. We control the corresponding term using estimates similar to  \eqref{calcul_frac},  by
\begin{align*}
& \f{1}{\Eps^{\mu}} \vpran{\int_{-\infty}^0 \int_\VV |v|  \chi_0  \frac{|y|^\beta \Eps |\xi \cdot v|}{(\epsilon \xi \cdot v |y|^\beta)^2+1} Q_0(y)\dy \dv}
\; \vpran{\sup_y \abs{\f{ \vint{ \hat q_\Eps (t,\xi, y) }}{ Q_0(y)}-  \hat  \rho_\Eps (t,\xi)}}
 \\
&
= C\int_\VV |v| |\xi \cdot v |^\f{n-1}{\beta}  \dv \; \sup_y \abs{ \int_\VV \f{\hat q_\Eps(t,\xi, y ,v) }{ Q_0(y)} \dv-  \int_\VV \hat  R_\Eps (t,\xi,v) \dv}
 \\
& \leq C |\xi |^\f{n-1}{\beta}  \int_\VV  \sup_y \abs{ \f{\hat q_\Eps }{ Q_0(y) }-  \hat  R_\Eps } \dv
 \\
& \leq C |\xi |^\f{n-1}{\beta} \int_\VV K^{1/2}(t,\xi, v) \dv
\leq C |\xi |^\f{n-1}{\beta} \vpran{\int_\VV K(t,\xi, v) \dv}^{1/2} \,.
\end{align*}
and we conclude,  using \eqref{est:Kfourier} because we assume $1+\mu >s$ in \eqref{assump:main}-\eqref{value_mu}, that $i\xi\cdot\hat {RJ_\Eps^1}$ vanishes in $\CalD'(\R^+ \times \R^d)$.

\subsection{The term $\hat J_\Eps^2$}
\label{sec:J2}

Back to \eqref{eq:Jdecomp}, we show that $\hat J_\Eps^2$ vanishes as $\Eps \to 0$. The term $\hat J_\Eps^2$ is given by 
\begin{align*}
\hat J_\Eps^2 & =  \Eps^{s-\mu} \int_\VV\int_\R\frac{\vpran{i \xi \cdot v}\chi_0}{i\epsilon\xi \cdot v+\Lambda}
     \del_y\vpran{D(y)Q_0(y)\del_y \f{\hat q_\epsilon}{Q_0(y)}}\dy\dv
\\
     &=   -\Eps^{s-\mu} \int_\VV\int_\R \left[\frac{\vpran{i \xi \cdot v} \del_y \chi_0}{i\epsilon\xi \cdot v+\Lambda} - \frac{\vpran{i \xi \cdot v}\chi_0  \del_y \Lambda }{(i\epsilon\xi \cdot v+\Lambda)^2} \right]
    D(y)Q_0(y)\del_y \f{\hat q_\epsilon}{Q_0(y)}\dy\dv
\end{align*}
after integrating by parts. 

Recalling the definition of $K$ in~\eqref{def:K}, and using the Cauchy-Schwarz inequality, we can get the upper bound 
\begin{align*}
 |  {\hat J_\Eps^2} |^2
&\leq C \Eps^{2(s-\mu) }  \int_\R \int_\VV  D(y) Q_0(y)
 \left[ \frac{ |\xi \cdot v|^2 (\del_y \chi_0)^2}{| \epsilon\xi \cdot v|^2 +\Lambda^2} 
 + \frac{|\xi \cdot v|^2 \chi_0^2  (\del_y \Lambda)^2 }{((\epsilon\xi \cdot v)^2+\Lambda^2 )^2 } \right] \dv\dy   \; \int_{\VV} K(t,\xi,v) \dv  
 \\ &=C \Eps^{2(s-\mu) }  \left[ G^1(t,\xi)+G^2(t,\xi) \right] \;  \int_{\VV} K(t,\xi,v) \dv .
\end{align*}

We begin with the term $G^1$. Using the definitions of $\chi_0$ in \eqref{def:chi-0}, we have
$$
G^1(t,\xi) = \int_\R \int_\VV \frac{1}{D(y) Q_0(y)} \frac{ |\xi \cdot v|^2 }{ | \epsilon\xi \cdot v|^2+ \Lambda^{2}}\dv\dy .
$$
Because, for $| y| \gg 1$, $\frac{1}{D(y) Q_0(y)} \approx | y |^{-n-1+\sigma}$ is integrable, the values $y>-M_0$ contribute to a small term and the difficulty is for $y<-M_0$. The corresponding contribution to $G^1$ is, using Lemma~\ref{lem:integral},
$$
\int_\R \int_\VV  | y |^{-n-1+\sigma}  \frac{ | y|^{2\beta} |\xi \cdot v|^2 }{1+ | \epsilon \xi \cdot v|^2 |y|^{2\beta}}\dv\dy = c \int_\VV  | \epsilon \xi \cdot v|^{\frac{n-\sigma -2 \beta}{\beta}} |\xi \cdot v|^2  \dv.
$$
Integrability in $v$ is immediate since $n> \sigma$. 
 The resulting power in $\Eps$ in the corresponding expression of $|  {\hat J_\Eps^2} |^2$ is, taking into account~\eqref{est:Kfourier}, 
$$
2(s-\mu)\;  +\frac{n-\sigma -2 \beta}{\beta} + 1\; +\mu-s = s+\frac{1-\sigma}{\beta} -1 >0
$$
thanks to the last condition in the parameter range~\eqref{assump:main}. Therefore this contribution vanishes in $L^2(\R^d)$.
\\

The term with $G^2$ is treated with different arguments depending on the values of $y$ and, because the middle range is easy  we treat separately $y> M_0$ and $y< -M_0$. For $y> M_0$, we use the condition for $\Lambda'$ in \eqref{def:Lambda} and obtain the bound by
\begin{align*}
    C \int_{y> M_0} \int_\VV  D(y) Q_0(y)  (\del_y \Lambda)^2 \ \dv\dy  
\leq
  C  \int_{y> M_0} \int_\VV |y |^{n+1-\sigma}  |y |^{-2\gamma} \ \dv\dy  
\end{align*}
which itself is bounded thanks to the parameter range $2\gamma >n+2-\sigma $ in~\eqref{assump:main}. Therefore this contribution to $G^2$ obviously vanishes. 

Finally, the contribution to $G^2$ for $y< -M_0$ is more elaborate. We have 
\begin{align*}
 \int_{y<- M_0} \int_\VV  D(y) Q_0(y) \chi_0^2  \frac{ (\del_y \Lambda)^2 |\xi \cdot v|^2 }{((\epsilon\xi \cdot v)^2+\Lambda^2 )^2 } \dv\dy & \leq C
  \int_{y<- M_0} \int_\VV  | y |^{1-n+\sigma}  \frac{| y |^{-2(1+\beta)} | y |^{4\beta} |\xi \cdot v|^2 }{(1+(\epsilon\xi \cdot v \,| y |^\beta)^2 )^2 } \dv\dy
\\
& \leq 
 C \int_\VV (\epsilon\xi \cdot v)^{\frac{n-\sigma- 2 \beta }{\beta}} |\xi \cdot v|^2 \dv 
  = C \epsilon^{\frac{n-\sigma- 2 \beta }{\beta}} |\xi|^{\frac{n - \sigma}{\beta}} .
 \end{align*}
Therefore, in $G^2$,  the  power of $\Eps$ stemming from this is 
$$
 2(s-\mu)\;  +\frac{n-\sigma- 2 \beta }{\beta} \; + 1+ \mu-s = s+ \frac{1-\sigma}{\beta} -1 >0
$$
using again the assumption~\eqref{assump:main}.

\subsection{The term $\hat J_\Eps^3$}

This term is 
\begin{align*}
\hat J_\Eps^3 (t, \xi)
= -  \Eps \int_\VV\int_\R\frac{\vpran{i \xi \cdot v}\chi_0}{i\epsilon\xi \cdot v+\Lambda} \hat q_\Eps \dy\dv \,,
\end{align*}
and we show that, for all $T>0$, this term vanishes strongly in $L^2((0,T) \times   \R^d )$ as $\Eps \to 0$. To this end, we separate the integral as
\begin{align*}
- \hat J_\Eps^3 (t, \xi)
= \Eps \int_{\VV} \int_{y>-M_0}\frac{\vpran{i \xi \cdot v}\chi_0}{i\Eps\xi \cdot v+\Lambda} \hat q_\Eps\dy\dv
+ \Eps \int_\VV \int_{y<-M_0}\frac{\vpran{i \xi \cdot v} \chi_0}{i\Eps\xi \cdot v+\Lambda} \hat q_\Eps\dy\dv \,.
\end{align*}

The term with the integration over $y > -M_0$ is easy to estimate because we control it, using the Cauchy-Schwarz inequality,  by
\begin{align*}
C \Eps \int_\VV \int_\R Q_0^{1/2}  \frac{ | \hat q_\Eps |}{Q_0^{1/2}} \dy\dv \leq   \Eps \left( \int_\VV \int_\R \frac{ | \hat q_\Eps |^2}{Q_0} \dy\dv\right)^{1/2}
\end{align*}
and this term is of order $\Eps$ in $L^2(\R^d)$ uniformly in time thanks to the second bound in~\eqref{L2-infty} which holds in Fourier variable as well. 
\\

The term with the integral over $y < -M_0$ has to be treated more carefully. Using using the Cauchy-Schwarz inequality, we have\begin{align*}
 \abs{ \Eps \int_\VV \int_{y<-M_0}\frac{\vpran{i \xi \cdot v} \chi_0}{i\Eps\xi \cdot v+\Lambda} \hat q_\Eps\dy\dv}^2
\leq 
 \Eps^2  \int_\VV \int_{y<-M_0}\frac{|\xi \cdot v|^2 \chi_0^2}{(\Eps\xi \cdot v)^2+\Lambda^2} Q_0\dy\dv \;  \int_\VV \int_{\RR}\frac{| \hat q_\Eps|^2}{Q_0} \dy\dv .
 \end{align*}
Using the assumptions in section~\ref{sec:assum} and Lemma~\ref{lem:integral}, this is also upper bounded by 
\begin{align*}
& \quad \,
  C \Eps^2  \int_\VV \int_{y<-M_0}  \frac{|\xi \cdot v|^2 |y|^{\sigma-2n+2 \beta}}{1 + (\Eps |\xi \cdot v |\, |y|^\beta)^2 } \dy dv  \;  \int_\VV \int_{\RR}\frac{| \hat q_\Eps|^2}{Q_0} \dy\dv  
\\
&\leq   
  C \Eps^2  \vpran{\int_\VV (\Eps |\xi \cdot v|)^{-\frac{\sigma-2n+2 \beta+1}{\beta}} |\xi \cdot v|^2  \dv}
  \;  \int_\VV \int_{\RR}\frac{| \hat q_\Eps|^2}{Q_0} \dy\dv  
\\
&\leq   
  C (\Eps|\xi|)^{2-\frac{\sigma-2n+2 \beta+1}{\beta}} 
  \;  \int_\VV \int_{\RR}\frac{| \hat q_\Eps|^2}{Q_0} \dy\dv.  
\end{align*}
Here integrability in $y$ and $v$ are due to the assumption that $n > \sigma >1$ in~\eqref{assump:main}. Therefore,  by the same $L^2$ bound for $\hat q_\Eps$ as above for the ``easy part'', we conclude that $\hat J_\Eps^3$ vanishes in $\CalD'(\R^+ \times \R^d)$ as $\Eps \to 0$.

\section{Conclusion}
In this work we give a new rigorous derivation of fractional diffusion limit for a bacterial population, with the remarkable feature that the speed of cells during their jump is bounded and their jumps are controlled by an internal process.
The intracellular noise can replace the infinite speed assumption in \cite{AS1, AS2}, and thus  plays an important role on the population-level behaviour for {\it E. coli} chemotaxis. 
In particular, when the intracellular noise is strong ($n>1$) and the adaptation process is slow ($s>1$), the bacteria move with a L{\'e}vy walk and their population-level behaviour turns out to satisfy a fractional diffusion equation. This is in contrast to the case when there is no noise involved and the population-level equation is a regular diffusion \cite{ErbanOthmer04,STY, X}.

Our derivation is obtained rigorously under the assumption that the parameters and coefficients satisfy \eqref{def:q0}-\eqref{assump:main}. The conditions of the coefficients in \eqref{def:q0}-\eqref{def:D} require that both the equilibrium and tumbling frequency decay polynomially with respect to the internal variable $y$ as $y\to-\infty$. Part of the assumptions for the parameters in~\eqref{assump:main} are for mathematical convenience and it is not yet clear to us whether they are biologically relevant. However, among them, the two major conditions $s>1$ and $n>1$ are consistent with those required in biophysics works \cite{TG,Matt09}, where with added noise in the chemotactic signally pathways, the authors perform stochastic simulations and obtain path length distributions with polynomial tails that correspond to L\'{e}vy processes.

Several points remain to undersstand. The case where the structuring variable is time between jumps, proposed in \cite{EGP}  is a possible direction.  Also, other scalings in the model with internal pathwayl are certainly possible. Finally, our current work does not contain chemical signals.  In the presence of this exterior influence, the bacteria move towards their favorite location by advection or advection/diffusion, see \cite{ST}. One interesting question is
how intracellular noise can affect the advection with the appearance of chemical signals. This will be for our future investigation.

\bibliographystyle{amsxport}
\bibliography{Ref}

\end{document}